\newtheorem{theorem}{Theorem}
\newtheorem*{proposition*}{Proposition}
\newtheorem{lemma}{Lemma}
\renewcommand{\eqref}[1]{Eq.~(\ref{#1})}
\newcommand{\R}{\mathbb{R}}
\newcommand{\N}{\mathbb{N}}
\newcommand{\mtv}{\boldsymbol{\delta}}
\newcommand{\inv}{^{-1}}
\newcommand{\abs}[1]{\left| #1 \right|}
\newcommand{\paren}[1]{\left( #1 \right)}
\newcommand{\sqprn}[1]{\left[ #1 \right]}
\newcommand{\nrm}[1]{\left\Vert #1 \right\Vert}
\newcommand{\vertiii}[1]{{\left\vert\kern-0.25ex\left\vert\kern-0.25ex\left\vert #1 
    \right\vert\kern-0.25ex\right\vert\kern-0.25ex\right\vert}}
\newcommand{\tv}[1]{\nrm{#1}_{\textup{\tiny\textsf{TV}}}}
\newcommand{\mexp}{\mathbb{E}}
\newcommand{\E}{\mathop{\mexp}}
\renewcommand{\P}{\mathbb{P}}
\newcommand{\eps}{\varepsilon}
\newcommand{\kl}[2]{D_{
\mathrm{KL}
  }
  \left(#1 || #2\right)
  }
\newcommand{\beq}{\begin{eqnarray*}}
\newcommand{\eeq}{\end{eqnarray*}}
\newcommand{\beqn}{\begin{eqnarray}}
\newcommand{\eeqn}{\end{eqnarray}}
\newcommand{\del}{\partial}
\newcommand{\ddel}[2]{\frac{\del#1}{\del#2}}
\newcommand{\maavar}[2]{ 
\overset{\textup{(#1)}}{#2}
}
\newcommand{\ent}[1][]{%
\ifthenelse{\isempty{#1}}{%
\mathrm{H}
}{
\mathrm{H}^{(#1)}
}}
\newcommand{\loch}[1][]{%
\ifthenelse{\isempty{#1}}{%
\mathrm{h}
}{
\mathrm{h}^{(#1)}
}}
\newcommand{\mathe}{\mathrm{e}}
\newcommand{\mathd}{\mathrm{d}}
\newcommand{\hide}[1]{}
\newcommand{\ub}{\operatorname{UB}}
\newcommand{\lb}{\operatorname{LB}}
\newcommand{\set}[1]{\left\{ #1 \right\}}
\newcommand{\Ber}{\operatorname{Ber}}
\newtheorem*{rep@theorem}{\rep@title}
\newcommand{\newreptheorem}[2]{%
\newenvironment{rep#1}[1]{%
 \def\rep@title{#2 \ref{##1}}%
 \begin{rep@theorem}}%
 {\end{rep@theorem}}}
\renewcommand{\eqref}[1]{(\ref{#1})}
\title{
On the tensorization of 
the variational distance
}
\author{%
  Aryeh Kontorovich \\
  \texttt{karyeh@cs.bgu.ac.il}
  }
\date{}
\begin{document}

\maketitle
\begin{center}\vspace{-1cm}\today\vspace{0.5cm}\end{center}

\begin{abstract}
If one seeks to estimate the total variation
between
two product measures
$
\tv{
P^\otimes_{1:n}
-
Q^\otimes_{1:n}
}
$
in terms of their
marginal TV sequence
$\mtv=
\paren{
\tv{P_1-Q_1}
,
\tv{P_2-Q_2}
,
\ldots
,
\tv{P_n-Q_n}
}
$,
then trivial upper and lower bounds are provided by
$
\nrm{\mtv}_\infty
\le
\tv{
P^\otimes_{1:n}
-
Q^\otimes_{1:n}
}
\le
\nrm{\mtv}_1
$.
We improve the lower bound to
$
\nrm{\mtv}_2
\lesssim
\tv{
P^\otimes_{1:n}
-
Q^\otimes_{1:n}
}
$,
thereby reducing the gap between the upper and lower bounds
from $\sim n$ to $\sim\sqrt n$.
Furthermore, we show that {\em any} estimate
on
$
\tv{
P^\otimes_{1:n}
-
Q^\otimes_{1:n}
}
$
expressed in terms of $\mtv$
must necessarily exhibit a gap of $\sim\sqrt n$
between the upper and lower bounds in the worst case, establishing a sense 
in which our estimate is optimal.
Finally, we identify a natural class of distributions
for which $\nrm{\mtv}_2$ approximates
the TV distance up to absolute multiplicative constants.

\end{abstract}

\section{Introduction}
Combining several probability distributions into a single
product measure is one of the most basic operations in probability
theory. 
Among the myriad notions of distance on distributions,
the total variation metric occupies an amply motivated central role \citep{MR1843146,gibbs02}. It is therefore
of fundamental importance to understand the basic ways in which
these two objects interact: how do product measures behave
under the variational metric? Perhaps surprisingly, our
understanding of this basic problem is still somewhat rudimentary
and although considerable progress has been made, natural and
challenging
open questions remain.

We will restrict our discussion to distributions on finite sets,
since for the kind of problems we study, these already capture the general case \citep[Theorem 7.6]{Polyanskiy_Wu_2024}.
If $P,P'$ are distributions on $\Omega,\Omega'$,
then $P\otimes P'$ is the product measure on $\Omega\times\Omega'$
given by $
(P\otimes P')(\omega,\omega)=P(\omega)P'(\omega')
$. For families of distributions $P_1,\ldots,P_n$,
we 
write
$
P^\otimes_{1:n}
=
P_1\otimes P_2\otimes\ldots\otimes P_n
$.
Our central object of interest is
the total variation distance between two product measures:
$
\tv{
P^\otimes_{1:n}
-
Q^\otimes_{1:n}
}
$.
The form
$
\tv{\cdot}
=
\frac12\nrm{\cdot}_1
$
(see \eqref{eq:tv-def})
suggests
that the complexity of computing
this
distance scales linearly with the support size.
However, in the interesting case
of product measures,
the 
support size
will grow exponentially with $n$,
and hence direct computation will be intractable. 
\citet[p. 124]{Polyanskiy_Wu_2024}
note that
``computing the total variation between two $n$-fold product distributions is typically difficult'', which motivates
various approximations by surrogate distances; more on this below.

Turning for the moment to algorithmic aspects,
\citet{DBLP:conf/ijcai/0001GMMPV23}
showed 
even for support size $2$,
the general problem of computing the variational distance
between two $n$-fold product measures exactly
is
computationally hard
in the $\#$P-complete sense.
\citet{FengApproxTV23} gave an efficient randomized algorithm for obtaining a $1\pm\eps$
multiplicative approximation with confidence $\delta$, in time $O(\frac{n^2}{\eps^2}\log\frac1\delta)$,
which was later derandomized by 
\citet{Feng24Deterministically}.
A novel connection between TV distance estimation and probabilistic inference was recently established by \citet{bhattacharyya2024total}.

\paragraph{Notation.}
$\Omega$ will always denote a finite set.
We will lighten notation by writing $P(\omega)=P(\set{\omega})$
for 
distributions 
$P$ on $\Omega$
and
$\omega\in\Omega$.
For two distributions $P,Q$ on 
$\Omega$, their total variation
distance is defined by
\beqn
\label{eq:tv-def}
\tv{P-Q}
&=&
\frac12\sum_{\omega\in\Omega}|P(\omega)-Q(\omega)|
.
\eeqn
We write $[n]:=\set{1,\ldots,n}$
and use
standard vector norm notation
$\nrm{x}_r^r=\sum_{i\in[n]}|x_i|^r$
and $\nrm{x}_\infty=\max_{i\in[n]}|x_i|$
for $x\in\R^n$.
For $p\in[0,1]$, 
$\Ber(p)$
denotes
the Bernoulli measure on $\set{0,1}$:
$\Ber(p)(0)=1- p$
and
$\Ber(p)(1)=p$.
For $n\in\N$ and $p=(p_1,\ldots,p_n)\in[0,1]^n$, 
$\Ber(p)$ denotes
the product of $n$ Bernoulli distributions with parameters $p_i$:
\beqn
\label{eq:berp}
\Ber(p) &:=& \Ber(p_1) \otimes \Ber(p_2)\otimes \ldots \otimes \Ber(p_n).
\eeqn

For two families of distributions,
$P_{i\in[n]}$
and
$Q_{i\in[n]}$,
we define their marginal total variation 
sequence
by
\beqn
\label{eq:mtv}
\mtv(
P_{i\in[n]}
,
Q_{i\in[n]}
)
&=&
\paren{
\tv{P_1-Q_1}
,
\tv{P_2-Q_2}
,
\ldots
,
\tv{P_n-Q_n}
}
\in\R^n.
\eeqn

{\em Tensorization} generally refers to
analyzing a complex, high-dimensional object in terms of its
simpler
low-dimensional components. In our context, it means
estimating
$
\tv{
P^\otimes_{1:n}
-
Q^\otimes_{1:n}
}
$
in terms of the 
marginal TV sequence
$
\mtv=
\mtv(
P_{i\in[n]}
,
Q_{i\in[n]}
)
$.
A trivial bound of this form is
\beqn
\label{eq:triv}
\nrm{\mtv}_\infty
\;\le\;
\tv{
P^\otimes_{1:n}
-
Q^\otimes_{1:n}
}
\;\le\;
\min\set{1,\nrm{\mtv}_1}
,
\eeqn
where the lower bound follows from the
data processing inequality (Lemma~\ref{lem:data-proc})
and the upper bound is an immediate consequence of
the coupling 
estimate
$
\tv{P\otimes P'
-
Q\otimes Q'
}
\le
\tv{P-Q}
+
\tv{P'-Q'}
-
\tv{P-Q}
\tv{P'-Q'}
$
\citep[Lemma 2.2]{kontorovich12}.
Each of the bounds in \eqref{eq:triv} is tight in the sense
of being achievable, but there is a factor of $n$ gap between the upper and lower.

Our main result is the following sharpening of \eqref{eq:triv}:
\begin{theorem}
\label{thm:main}
For two families of distributions,
$P_{i\in[n]}$
and
$Q_{i\in[n]}$,
and
their marginal total variation 
sequence
$
\mtv=\mtv(
P_{i\in[n]}
,
Q_{i\in[n]}
)
$
defined in
\eqref{eq:mtv},
we have
\beq
\tv{
P^\otimes_{1:n}
-
Q^\otimes_{1:n}
}
&\ge&
c\min\set{1,\nrm{\mtv}_2},
\eeq
where $
c>
0.1798
$
is a universal constant.
\end{theorem}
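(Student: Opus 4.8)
The plan is to reduce the marginals to Bernoulli, dispose of two cheap regimes, and then carry the weight in the remaining ``spread'' regime with an anti-concentration argument. Write $\mathsf{d}=\tv{P^\otimes_{1:n}-Q^\otimes_{1:n}}$. If $\nrm{\mtv}_2\ge1$ I would use the Hellinger affinity $\rho(P,Q)=\sum_\omega\sqrt{P(\omega)Q(\omega)}$, which is multiplicative over products, obeys $1-\rho(P_i,Q_i)\ge\tfrac12\tv{P_i-Q_i}^2$, and satisfies $\tv{\cdot}\ge1-\rho(\cdot)$ since $\min\le\sqrt{\,\cdot\,}$; then $\mathsf{d}\ge 1-\prod_i\rho(P_i,Q_i)\ge 1-e^{-\nrm{\mtv}_2^2/2}\ge 1-e^{-1/2}>0.1798$. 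So assume $\nrm{\mtv}_2\le1$. Taking $A_i=\{P_i>Q_i\}$ and pushing each coordinate through $\omega\mapsto\pred{\omega\in A_i}$, the data-processing inequality (\lemref{lem:data-proc}) gives $\mathsf{d}\ge\tv{\Ber(p)-\Ber(q)}=:T$ with $p_i-q_i=\delta_i\ge0$, and it suffices to lower-bound $T$. Using a coordinatewise bit-flip I may also assume $p_i+q_i\le1$, and (projecting to a coordinate with $\delta_i>\tfrac13$ would already give $\mathsf{d}>\tfrac13\ge\tfrac13\nrm{\mtv}_2$) that $\delta_i\le\tfrac13$; hence $0\le q_i\le p_i\le\tfrac23$.

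Fix thresholds $\beta,c_0$. If $\nrm{\mtv}_\infty\ge\beta\nrm{\mtv}_2$, project to the largest coordinate: $\mathsf{d}\ge\nrm{\mtv}_\infty\ge\beta\nrm{\mtv}_2$, and we are done with $c\le\beta$. So assume $\nrm{\mtv}_\infty<\beta\nrm{\mtv}_2$ and split $[n]=B\cup S$, with $B=\{i:q_i\ge c_0\}$ the ``bulk'' — there the log-odds $\lambda_i=\log\frac{p_i(1-q_i)}{q_i(1-p_i)}$ satisfies $\lambda_i\asymp\delta_i$ and the per-coordinate variances are $\Theta(1)$ — and $S$ the ``light'' part, where $p_i,q_i=O(c_0)$. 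Writing $p_B,q_B$ for the restrictions of $p,q$ to $B$ (and likewise on $S$) and $T_B,T_S$ for the corresponding sub-product TV distances, data processing gives $T\ge\max\{T_B,T_S\}$, and since $\nrm{\mtv_B}_2^2+\nrm{\mtv_S}_2^2=\nrm{\mtv}_2^2$ it suffices to prove $T_B\gtrsim\nrm{\mtv_B}_2$ and $T_S\gtrsim\nrm{\mtv_S}_2$.

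For the bulk, use the Neyman--Pearson test: $T_B=\P_{\Ber(p_B)}(Z\ge\theta)-\P_{\Ber(q_B)}(Z\ge\theta)$ with $Z=\sum_{i\in B}\lambda_ix_i$ and $\theta=\sum_{i\in B}\log\frac{1-q_i}{1-p_i}$. A direct computation yields $\theta-\E_{\Ber(q_B)}[Z]=\sum_{i\in B}\mathrm{KL}(\Ber(q_i)\|\Ber(p_i))$ and $\E_{\Ber(p_B)}[Z]-\theta=\sum_{i\in B}\mathrm{KL}(\Ber(p_i)\|\Ber(q_i))$; by Pinsker (below) and $\mathrm{KL}\le\chi^2$ (above) both are $\asymp\nrm{\mtv_B}_2^2$, and $\mathrm{Var}_{\Ber(p_B)}[Z],\mathrm{Var}_{\Ber(q_B)}[Z]\asymp\nrm{\mtv_B}_2^2$ as well. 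So under each of $\Ber(p_B),\Ber(q_B)$ the statistic $Z$ has standard deviation $\sigma\asymp\nrm{\mtv_B}_2$ and $\theta$ sits within $O(\sigma\cdot\nrm{\mtv_B}_2)=O(\sigma)$ of the respective mean; a quantitative (local) CLT for the weighted sum $Z$ then shows $\P_{\Ber(q_B)}(Z<\theta)$ exceeds $\tfrac12$ by $\gtrsim(\theta-\E_{\Ber(q_B)}[Z])/\sigma$ and $\P_{\Ber(p_B)}(Z<\theta)$ falls below $\tfrac12$ by $\gtrsim(\E_{\Ber(p_B)}[Z]-\theta)/\sigma$, and subtracting gives $T_B\gtrsim\nrm{\mtv_B}_2^2/\sigma\asymp\nrm{\mtv_B}_2$. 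On $S$ the same scheme applies to the count $\sum_{i\in S}x_i$ (which is essentially Poisson); additionally, coupling each $\Ber(p_i),\Ber(q_i)$ maximally and monotonically, on the event of probability $1-\prod_{i\in S}(1-\delta_i)$ that some coordinate is ``forced'' ($x_i^P=1,x_i^Q=0$) the count jumps by $\ge1$, which exceeds the typical fluctuation of the light-part count, so $T_S\gtrsim\min\{1,\nrm{\mtv_S}_1\}\ge\nrm{\mtv_S}_2$.

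The crux — and the step I expect to be hard — is the quantitative/local CLT for the weighted sums $\sum\lambda_ix_i$: the weights $\lambda_i$ may be of wildly different sizes and need not be commensurable, so no textbook local limit theorem applies; one needs an upper bound on the concentration function (Kolmogorov--Rogozin style, with the scale tuned to the $\asymp\nrm{\mtv_B}_2^2$-sized gap $\theta-\E[Z]$) together with a complementary spreading-out lower bound near the mean, and it is essential to compare $\mathcal{L}_{\Ber(p_B)}(Z)$ and $\mathcal{L}_{\Ber(q_B)}(Z)$ \emph{to each other} rather than each to a fixed Gaussian, since otherwise the Berry--Esseen error ($\asymp\beta$) would swamp the target ($\asymp\nrm{\mtv_B}_2$) whenever $\nrm{\mtv_B}_2$ is small; one must also argue that, in the case this CLT is invoked, no single coordinate dominates (a dominant one having put us in the cheap regime or in $S$). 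The secondary difficulty is purely organizational: the thresholds $\beta,c_0$ and the constants issuing from Pinsker, the CLT, and the singular-term estimate must be made compatible across the bulk/light/near-boundary decomposition and the cheap regime, and it is this joint optimization — not any single clean inequality — that produces a specific numerical value such as $c>0.1798$.
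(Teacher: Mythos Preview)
Your Hellinger argument for $\nrm{\mtv}_2\ge1$ and the reduction to Bernoulli are correct and match the paper. After that the routes diverge completely, and your sketch has a real gap that you yourself flag but do not close.

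The paper does not split into bulk/light or invoke any CLT. Instead it applies a single \emph{symmetrization} step: for every pair $(p_i,q_i)$ there is an explicit $2\times2$ stochastic matrix sending $\Ber(p_i)\mapsto\Ber(\tfrac12+\tfrac{\hat\gamma_i}2)$ and $\Ber(q_i)\mapsto\Ber(\tfrac12-\tfrac{\hat\gamma_i}2)$ with $\hat\gamma_i\ge\tfrac12|p_i-q_i|$. By data processing this reduces to the symmetric case $\hat p+\hat q=1$ at the cost of a factor $2$ in $\nrm{\mtv}_2$. In the symmetric case one writes $\min\{u,v\}=\sqrt{uv}\,\mathe^{-\frac12|\log(u/v)|}$, notes that $\sqrt{\hat P(\omega)\hat Q(\omega)}$ is \emph{constant} in $\omega$, and is left with $\E_Z\exp(-\tfrac12|\sum_i\xi_i|)$ for i.i.d.\ symmetric $\xi_i=\log(\hat P_i(Z_i)/\hat Q_i(Z_i))$. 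A Khintchine-type inequality (Lowther's lemma) then gives $\E f(|\sum_i\xi_i|)\ge c^{-1}\E f(\sqrt{\sum_i\xi_i^2})$ for concave increasing $f$, and in the symmetric case $\sum_i\xi_i^2$ is deterministic, so the bound drops out with no regime-splitting whatsoever. The explicit $0.1798$ comes from the constant $c\le(1/\sqrt2-1/4)^{-1}$ in that lemma together with $1-\mathe^{-x}\ge(2-2\mathe^{-1/2})\min\{x,\tfrac12\}$.

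In your plan, the acknowledged crux---comparing $\mathcal L_{\Ber(p_B)}(Z)$ and $\mathcal L_{\Ber(q_B)}(Z)$ directly so that the $O(\beta)$ Berry--Esseen errors cancel and the $O(\nrm{\mtv_B}_2)$ signal survives---is not carried out, and no off-the-shelf result does it for you; it would need at least a matched second-order Edgeworth/Stein argument. Separately, your light-part claim $T_S\gtrsim\min\{1,\nrm{\mtv_S}_1\}$ is false as stated: take $q_i=c_0/2$, $p_i=c_0/2+\eps$ for all $i\in S$; the count is sufficient here, giving $T_S\asymp\sqrt{|S|}\,\eps/\sqrt{c_0}$ while $\nrm{\mtv_S}_1=|S|\eps$, so the ratio tends to $0$ as $|S|\to\infty$. (The weaker $T_S\gtrsim\nrm{\mtv_S}_2$ survives in this example, but not via your coupling-plus-count argument.) Also, defining $S$ by $q_i<c_0$ does not force $p_i$ small, so $\sum_{i\in S}x_i$ need not be ``essentially Poisson''. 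None of this is fatal to the strategy, but the hard analytic core is still ahead of you; the paper's symmetrization trick makes all of this case analysis disappear.
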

Since $\nrm{\mtv}_1\le\sqrt n\nrm{\mtv}_2$,
Theorem~\ref{thm:main} improves the gap between the upper
and lower bounds in \eqref{eq:triv} from $n$ to $\sqrt n$.
It turns out that this is the best one can do in terms of
the 
marginal total variation sequence:
\begin{theorem}
\label{thm:sqrt-opt}
For every $n\ge1$, there exist
$p,p',q,q'\in[0,1]^n$
such that
$\nrm{\mtv}_2=
\nrm{p-q}_2
=
\nrm{p'-q'}_2
$,
while
\beq
\frac
{
\tv{
\Ber(p)-
\Ber(q)
}
}
{
\tv{
\Ber(p')-
\Ber(q')
}
}
&\ge&
c\sqrt n,
\eeq
where $c>0$ is an absolute constant.
\end{theorem}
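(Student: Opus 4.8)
The plan is to exhibit two explicit pairs of Bernoulli products realizing the opposite extremes of the trivial estimate \eqref{eq:triv}: one pair whose TV distance is of order $\min\set{1,\nrm{\mtv}_1}$, and one whose TV distance is only of order $\nrm{\mtv}_\infty$, while forcing both pairs to share the same marginal-TV $\ell_2$ norm. Since $\nrm{\mtv}_1$ can be as large as $\sqrt n\,\nrm{\mtv}_2$, such a pair immediately produces the claimed $\sqrt n$ separation. The worst case occurs at $\nrm{\mtv}_2$ of order $1/\sqrt n$: there the spread-out vector $(1/n,\ldots,1/n)$ has $\ell_1$ norm $1$, so its product TV is of constant order, whereas the concentrated vector $(1/\sqrt n,0,\ldots,0)$ has product TV exactly $1/\sqrt n$. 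To make both TV computations trivial I take all $Q$-marginals degenerate: with $q=q'=\zero$, both $\Ber(q)$ and $\Ber(q')$ are point masses at the all-zeros string, and \eqref{eq:tv-def} gives in one line $\tv{\Ber(p)-\Ber(\zero)}=1-\prod_{i\in[n]}(1-p_i)$.

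Concretely, set $q=q'=\zero$, $p=(1/n,\ldots,1/n)$, and $p'=(1/\sqrt n,0,\ldots,0)$, and run four elementary checks. (i) Norm constraint: $\nrm{p-q}_2=\sqrt{n\cdot n^{-2}}=1/\sqrt n=\nrm{p'-q'}_2$, and every coordinate lies in $[0,1]$ for all $n\ge1$; in particular $\nrm{\mtv}_2=1/\sqrt n$ for both pairs. (ii) Denominator: since only the first coordinate of $p'$ is nonzero, $\tv{\Ber(p')-\Ber(q')}=\tv{\Ber(1/\sqrt n)-\Ber(0)}=1/\sqrt n$. (iii) Numerator: $\tv{\Ber(p)-\Ber(q)}=1-(1-1/n)^n\ge 1-e^{-1}$, using $1-x\le e^{-x}$ and $\sum_i p_i=1$. (iv) Ratio: dividing (iii) by (ii), $\tv{\Ber(p)-\Ber(q)}/\tv{\Ber(p')-\Ber(q')}\ge(1-e^{-1})\sqrt n$, so the theorem holds with the absolute constant $c=1-e^{-1}>0.6$.

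I do not expect a genuine obstacle: the entire content is in choosing the right pair of constructions, and the only analytic inputs are the point-mass identity $\tv{\mu-\Ber(\zero)}=1-\mu(\zero)$ — immediate from \eqref{eq:tv-def}, and precisely what the degenerate-$Q$ choice exploits — together with $(1-1/n)^n\le e^{-1}$. To see that the separation is essentially unimprovable, the same construction with $p=(c_0/n,\ldots,c_0/n)$ and $p'=(c_0/\sqrt n,0,\ldots,0)$ keeps $\nrm{\mtv}_2=c_0/\sqrt n$, yields numerator $1-(1-c_0/n)^n\ge 1-e^{-c_0}$ and denominator $c_0/\sqrt n$, hence a ratio at least $\sqrt n\,(1-e^{-c_0})/c_0$, which approaches $\sqrt n$ as $c_0\to0$; the simpler first choice already suffices for the stated theorem.
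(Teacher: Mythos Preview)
Your proof is correct. The numerator pair $(p,q)=((1/n,\ldots,1/n),\zero)$ is identical to the paper's, and your computation $\tv{\Ber(p)-\Ber(q)}=1-(1-1/n)^n\ge 1-\mathe^{-1}$ matches. The difference is in the denominator pair: the paper takes the \emph{symmetric} choice $p'_i=\tfrac12+\tfrac1{2n}$, $q'_i=\tfrac12-\tfrac1{2n}$ and invokes Theorem~\ref{thm:symm-ub} to obtain $\tv{\Ber(p')-\Ber(q')}\le\nrm{p'-q'}_2=n^{-1/2}$, whereas you take the \emph{concentrated} choice $p'=(n^{-1/2},0,\ldots,0)$, $q'=\zero$, for which the TV is exactly $n^{-1/2}$ by a one-line computation against a point mass. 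Your route is more self-contained, since it does not rely on the separate Theorem~\ref{thm:symm-ub}; the paper's route, on the other hand, makes explicit that the $\sqrt n$ gap already appears within the symmetric family $\hat p=1-\hat q$ produced by the symmetrization Lemma~\ref{lem:sym}, which is the narrative point the paper wants to emphasize. Either construction proves the theorem with the same constant $c=1-\mathe^{-1}$.
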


We are able to pinpoint the precise step in the proof
where
the $\sqrt n$ gap between the upper and lower bounds
is introduced: namely, in the use of symmetrization
(Lemma~\ref{lem:sym})
to reduce
the analysis of
$
\tv{
\Ber(p)-\Ber(q)
}
$
from the case of general $p,q\in[0,1]^n$
to the symmetric special case where $p=1-q$.
As our next result shows, in the symmetric case,
the gap disappears:
\begin{theorem}
\label{thm:symm-ub}
For $p\in[0,1]^n$
and $q=1-p$, we have
\beq
\tv{
\Ber(p)-\Ber(q)
}
&\le&
\nrm{p-q}_2.
\eeq
\end{theorem}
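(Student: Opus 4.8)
The plan is to reduce the statement to two classical facts — the multiplicative tensorization of the Hellinger affinity (Bhattacharyya coefficient) and the \emph{sharp} comparison between total variation and that affinity — and then close with an elementary product inequality. Throughout I would exploit the symmetry $q=1-p$ by writing $\delta_i := \tv{\Ber(p_i)-\Ber(q_i)} = |p_i-q_i| = |2p_i-1| \in [0,1]$, so that $\nrm{p-q}_2^2 = \sum_{i\in[n]}\delta_i^2$ and the target bound becomes $\tv{\Ber(p)-\Ber(q)} \le \sqrt{\sum_{i\in[n]}\delta_i^2}$.

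The first ingredient is the inequality $\tv{P-Q} \le \sqrt{1-\rho(P,Q)^2}$, where $\rho(P,Q) := \sum_{\omega\in\Omega}\sqrt{P(\omega)Q(\omega)}$ is the Hellinger affinity. I would establish it in one line: factor $|P(\omega)-Q(\omega)| = |\sqrt{P(\omega)}-\sqrt{Q(\omega)}|\,(\sqrt{P(\omega)}+\sqrt{Q(\omega)})$, apply Cauchy--Schwarz over $\omega$, and use $\sum_\omega(\sqrt{P(\omega)}\pm\sqrt{Q(\omega)})^2 = 2 \pm 2\rho(P,Q)$ to get $2\,\tv{P-Q} \le \sqrt{(2-2\rho(P,Q))(2+2\rho(P,Q))}$. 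It is essential to use this sharp form rather than the cruder $\tv{P-Q}\le\sqrt{2(1-\rho(P,Q))}$, which would only yield $\tv{\Ber(p)-\Ber(q)}\le\sqrt2\,\nrm{p-q}_2$ — already a factor $\sqrt2$ too large when some $p_i\in\set{0,1}$.

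The second ingredient is that $\rho$ tensorizes, $\rho(P\otimes P', Q\otimes Q') = \rho(P,Q)\,\rho(P',Q')$, which is immediate from the definition; hence $\rho(\Ber(p),\Ber(q)) = \prod_{i\in[n]}\rho(\Ber(p_i),\Ber(q_i))$. A one-coordinate computation in the symmetric case gives $\rho(\Ber(p_i),\Ber(1-p_i)) = 2\sqrt{p_i(1-p_i)} = \sqrt{1-(2p_i-1)^2} = \sqrt{1-\delta_i^2}$. Combining this with the first ingredient, $\tv{\Ber(p)-\Ber(q)}^2 \le 1-\rho(\Ber(p),\Ber(q))^2 = 1-\prod_{i\in[n]}(1-\delta_i^2)$.

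To finish, I would invoke the Weierstrass product inequality $\prod_i(1-x_i) \ge 1-\sum_i x_i$ for $x_i\in[0,1]$ (a one-line induction), applied with $x_i=\delta_i^2$: this gives $1-\prod_{i\in[n]}(1-\delta_i^2) \le \sum_{i\in[n]}\delta_i^2 = \nrm{p-q}_2^2$, and taking square roots completes the proof. I do not anticipate a genuine obstacle; the only points requiring care are using the sharp TV--affinity inequality (the loose one loses a constant factor precisely at the Bernoulli endpoints) and the elementary identity $2\sqrt{p_i(1-p_i)}=\sqrt{1-\delta_i^2}$ that makes the product collapse to a clean form.
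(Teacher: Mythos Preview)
Your argument is correct and complete. The chain $\tv{P-Q}\le\sqrt{1-\rho(P,Q)^2}$, the multiplicativity of the affinity $\rho$, the one-coordinate identity $\rho(\Ber(p_i),\Ber(1-p_i))=\sqrt{1-\delta_i^2}$, and the Weierstrass bound $1-\prod_i(1-\delta_i^2)\le\sum_i\delta_i^2$ all hold exactly as you state them, and together they yield the claim with no slack to spare.

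Your route, however, is genuinely different from the paper's and considerably more elementary. The paper does not go through the Hellinger affinity at all; instead it invokes an external result (a lower bound on $\sum_\omega\min\{P(\omega),Q(\omega)\}$ from \citet{ak-experts24}) involving the quantity $\prod_i 2\sqrt{p_iq_i}\cdot\exp\!\big(-\tfrac12\sqrt{\sum_i(\log(p_i/q_i))^2}\big)$, then performs two substitutions ($x_i=2p_i-1$, then $t_i=\tanh^{-1}x_i$) to reduce the desired inequality to the nonnegativity of an explicit function $F$, which it verifies by computing partial derivatives. Your proof is self-contained, uses only the sharp TV--Hellinger comparison that the paper itself has already quoted in \eqref{eq:H2}, and avoids both the cited external theorem and the calculus. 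What the paper's machinery buys is a pointwise lower bound on the overlap that is in principle tighter than the Bhattacharyya bound, but for the specific target inequality here that extra strength is not needed, and your approach reaches the conclusion more directly.
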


\paragraph{Open problem.}
In light of the hardness result of
\citet{DBLP:conf/ijcai/0001GMMPV23},
we do not expect a simple, analytically tractable
formula 
(or even just an efficient algorithm)
for computing
$\tv{
\Ber(p)-
\Ber(q)
}
$
exactly.
An efficient algorithm for approximating TV to arbitrary
precision was recently obtained by \citet{Feng24Deterministically}.
We ask whether there are simple, analytically tractable
upper and lower bounds $\ub(p,q), \lb(p,q)$
whose ratio is bounded by some absolute constant independent
of $n$, $p$, and $q$.

\paragraph{Related work.}
\citet{gibbs02} discuss a number of important distance functions over probability measures
commonly used in statistics and probability
and their relation to the total variation metric.
In 
information theory and
statistics, 
KL-divergence and the Hellinger distance
are frequently used as surrogates for the latter. 
These are defined, respectively, by
\beq
\kl{P}{Q} &=&
\sum_{\omega\in\Omega}
P(\omega)\log\frac
{ P(\omega)}
{ Q(\omega)},
\\
H^2(P,Q) &=& 
\sum_{\omega\in\Omega}
\paren{\sqrt{P(\omega)}-\sqrt{Q(\omega)}}^2,
\eeq
and feature the convenient property of tensorizing,
in the sense of 
$
\kl{
P^\otimes_{1:n}
}{
Q^\otimes_{1:n}
}
$
and
$
H^2(
P^\otimes_{1:n}
,
Q^\otimes_{1:n}
)
$
both being easily 
expressible
in terms of
the component
$
\kl{
P_i
}{
Q_i
}
$
and
$
H^2(
P_i
,
Q_i
)
$,
respectively. The following relations between these
distance measures and $\tv{\cdot}$
are known \citep[Eq.((7.22)]{Polyanskiy_Wu_2024}:
\beqn
\label{eq:H2}
\frac12 H^2(P,Q)
\;\le\;
\tv{P-Q}
\;\le\;
H(P,Q)\sqrt{1-\frac{H^2(P,Q)}4}
,
\eeqn
and
\citep[Lemma 3.10]{Even-DarKM07},
\citep[Theorem 7.10]{Polyanskiy_Wu_2024}:
\beqn
\label{eq:kl}
\frac{\kl{P}{Q}}{
2
\log P_{\min}
}
\;\le\;
\tv{P-Q}
\;\le\;
\sqrt{
\frac{\kl{P}{Q}}{
2
}
},
\eeqn
where
$
P_{\min}
=
\min_{\omega\in\Omega} P(\omega)
$
and $P_{\min}<\frac12$ is assumed.
See
\citet{SasonV16}
for additional relationships between the various $f$-divergences,
of which 
$D_{\mathrm{KL}}$
and $\tv{\cdot}$
are special cases,
and
\citet{CohenKKW23}
for bounds involving $\tv{\cdot}$ and moments of information.

Although the bounds
in \eqref{eq:H2} and \eqref{eq:kl}
are useful for various applications,
there is a clear
sense in which
these are inferior to
\eqref{eq:triv}. 
If we express generic lower and upper bounds as
$\lb(P,Q)\le\tv{P-Q}\le\ub(P,Q)$,
we say that these have the bounded-ratio property if
\beqn
\label{eq:brp}
\sup_{P,Q}\frac{\ub(P,Q)}{\lb(P,Q)} < \infty.
\eeqn
Then \eqref{eq:triv} satisfies this property with a ratio bound of $n$,
while neither of (\ref{eq:H2}), (\ref{eq:kl})
does.

\section{Proofs}

\subsection{
Scheff\'e's identity,
data processing, reduction to Bernoulli}

We will make 
repeated 
use of 
Scheff\'e's identity
\citep[Lemma 2.1]{tsybakov09}:
\beqn
\label{eq:scheffe}
\tv{P-Q}
\;=\;
\max_{A\subset\Omega}P(A)-Q(A)
\;=\;
1-
\nrm{\min\set{P,Q}}_1
,
\eeqn
where 
$\nrm{\min\set{P,Q}}_1
=\sum_{\omega\in\Omega}
\min\set{P(\omega),Q(\omega)}
$.

A general form of the data processing inequality for $f$-divergences
may be found in 
\citet[Theorem 7.4]{Polyanskiy_Wu_2024}. Since total variation
is a special case (Eq. (7.3) ibid.), we restrict our statement to TV.
\begin{lemma}[Data processing inequality]
\label{lem:data-proc}
Let $T$ be a Markov operator mapping distributions to distributions.
Then, for all distributions $P,Q$, we have
\beq
\tv{T(P)-T(Q)}
&\le&
\tv{P-Q}.
\eeq
\end{lemma}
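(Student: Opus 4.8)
The plan is to unwind the definition of a Markov operator on the finite set $\Omega$ (and on its image space): such an operator is represented by a stochastic kernel $K=(K(\omega,\omega'))$, i.e.\ $K(\omega,\omega')\ge 0$ with $\sum_{\omega'}K(\omega,\omega')=1$ for every $\omega$, acting by $T(P)(\omega')=\sum_{\omega}P(\omega)K(\omega,\omega')$. The identity $\tv{\cdot}=\frac12\nrm{\cdot}_1$ from \eqref{eq:tv-def} then reduces the claim to an elementary $\ell_1$ estimate, and there is nothing probabilistic left to do.

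First I would write
\beq
\tv{T(P)-T(Q)}
=\frac12\sum_{\omega'}\abs{\sum_{\omega}K(\omega,\omega')\paren{P(\omega)-Q(\omega)}}.
\eeq
Applying the triangle inequality inside the outer sum and using $K(\omega,\omega')\ge 0$ bounds this by $\frac12\sum_{\omega'}\sum_{\omega}K(\omega,\omega')\abs{P(\omega)-Q(\omega)}$. I would then interchange the two (finite, hence harmless) sums and use $\sum_{\omega'}K(\omega,\omega')=1$ to collapse the inner one, which leaves exactly $\frac12\sum_{\omega}\abs{P(\omega)-Q(\omega)}=\tv{P-Q}$.

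An alternative route, for readers who prefer to avoid kernel notation, goes through Scheff\'e's identity \eqref{eq:scheffe}: for any event $A'$ in the target space, set $g(\omega):=\sum_{\omega'\in A'}K(\omega,\omega')\in[0,1]$, so that $T(P)(A')-T(Q)(A')=\E_P[g]-\E_Q[g]$; since $\sup_{g\colon\Omega\to[0,1]}\paren{\E_P[g]-\E_Q[g]}=\tv{P-Q}$ — the supremum being attained at an indicator, which is precisely the content of \eqref{eq:scheffe} — taking the maximum over $A'$ yields the claim.

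I do not expect a genuine obstacle: the statement is classical and, once the Markov operator is written as a stochastic kernel, the argument is a two-line computation resting only on the triangle inequality and the row-sum normalization. The single point deserving a word of care is the identification of ``Markov operator'' with such a kernel together with the legitimacy of swapping the two finite sums; the extension beyond finite $\Omega$ (not needed here, given the standing finiteness convention) would replace the sums by integrals against a transition probability kernel and invoke Tonelli, but is otherwise verbatim the same.
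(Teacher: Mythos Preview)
Your argument is correct. The paper, however, does not supply a proof of this lemma at all: it simply cites \citet[Theorem~7.4]{Polyanskiy_Wu_2024} for the general data processing inequality for $f$-divergences and observes that total variation is a special case. Your direct kernel computation via the triangle inequality (and the alternative via Scheff\'e) is more elementary and self-contained than the cited route through $f$-divergences, and is entirely adequate given the paper's standing restriction to finite $\Omega$.
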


\begin{lemma}
\label{lem:red-ber}
If Theorem~\ref{thm:main} holds for 
the special case
where
$P=\Ber(p)$, $Q=\Ber(q)$
for 
$p,q\in[0,1]^n$,
then it 
holds in the general case.
\end{lemma}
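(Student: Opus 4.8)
The plan is to exhibit, for each coordinate, a deterministic Markov kernel that compresses the underlying space down to two points while exactly preserving the marginal total variation, and then to invoke the data processing inequality (\lemref{lem:data-proc}) together with the assumed Bernoulli case of \thmref{thm:main}.

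First I would apply Scheff\'e's identity \eqref{eq:scheffe} coordinatewise: for each $i\in[n]$ there is a set $A_i\subset\Omega_i$ with $P_i(A_i)-Q_i(A_i)=\tv{P_i-Q_i}$. Put $p_i=P_i(A_i)$ and $q_i=Q_i(A_i)$, and let $T_i\colon\Omega_i\to\set{0,1}$ be the map $\omega\mapsto\pred{\omega\in A_i}$, regarded as a (deterministic, hence legitimate) Markov operator on distributions. Then $T_i$ pushes $P_i$ forward to $\Ber(p_i)$ and $Q_i$ to $\Ber(q_i)$, and since $\tv{\Ber(a)-\Ber(b)}=|a-b|$ for $a,b\in[0,1]$, we get $\tv{\Ber(p_i)-\Ber(q_i)}=|p_i-q_i|=\tv{P_i-Q_i}$. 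Thus the marginal total variation sequence of the Bernoulli families $\Ber(p_{i\in[n]})$, $\Ber(q_{i\in[n]})$ coincides with $\mtv=\mtv(P_{i\in[n]},Q_{i\in[n]})$.

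Next, let $T=T_1\otimes\cdots\otimes T_n$ be the product kernel sending distributions on $\Omega_1\times\cdots\times\Omega_n$ to distributions on $\set{0,1}^n$. Because $T$ acts on each coordinate independently, the pushforward of a product measure is the product of the coordinatewise pushforwards, so $T(P^\otimes_{1:n})=\Ber(p)$ and $T(Q^\otimes_{1:n})=\Ber(q)$ with $p=(p_1,\ldots,p_n)$ and $q=(q_1,\ldots,q_n)$. Applying \lemref{lem:data-proc} and then the assumed Bernoulli instance of \thmref{thm:main} (whose right-hand side depends on $P,Q$ only through $\mtv$, which we have preserved),
\beq
\tv{P^\otimes_{1:n}-Q^\otimes_{1:n}}
&\ge&
\tv{T(P^\otimes_{1:n})-T(Q^\otimes_{1:n})}
\;=\;
\tv{\Ber(p)-\Ber(q)}
\;\ge\;
c\min\set{1,\nrm{\mtv}_2},
\eeq
which is the claimed bound in full generality.

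The argument is essentially routine; the only points that warrant a line of care are verifying that a coordinatewise deterministic map sends a product measure to the product of the pushforward measures, and checking that the data processing inequality is being applied in the favorable direction — it is, since data processing can only shrink total variation and we are seeking a \emph{lower} bound on the TV of the original, uncompressed pair.
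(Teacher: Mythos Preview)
Your proof is correct and follows essentially the same route as the paper: define the Scheff\'e set $A_i$ in each coordinate, collapse $\Omega_i$ to $\set{0,1}$ via the indicator of $A_i$ to obtain a Markov kernel that preserves the marginal TV, take the product kernel, and apply the data processing inequality together with the assumed Bernoulli case. The paper's proof is slightly terser (it writes $A_i=\set{\omega:P_i(\omega)>Q_i(\omega)}$ directly rather than appealing to the existence part of Scheff\'e), but the argument is the same.
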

\begin{proof}
Let $P_i$ and $Q_i$ be distributions over $\Omega_i$,
$i\in[n]$.
For each $i\in[n]$, define
$$A_i=\set{
\omega\in\Omega_i:
P_i(\omega)>Q_i(\omega)
}$$
and
$p_i=P_i(A_i)$,
$q_i=Q_i(A_i)$.
Clearly, there is a Markov operator $T_i$ that maps
$P_i$ and $Q_i$ to $\Ber(p_i)$ and $\Ber(q_i)$, respectively
(by collapsing the states in $A_i$ into a single state and doing the
same for the complement). Applying all of the $T_i$s simultaneously yields
a Markov operator $T$ mapping 
$P^\otimes_{1:n}$
and
$Q^\otimes_{1:n}$
to
$\Ber(p)$
and
$\Ber(q)$,
respectively, where
$p=(p_1,\ldots,p_n)$
and
$q=(q_1,\ldots,q_n)$. It follows from
Lemma~\ref{lem:data-proc} that
$
\tv{
P^\otimes_{1:n}
-
Q^\otimes_{1:n}
}
\ge
\tv{
\Ber(p)
-
\Ber(q)
}
$.
On the other hand,
by
\eqref{eq:scheffe},
$\tv{P_i-Q_i}=|p_i-q_i|$,
and so
$
\mtv(
P^\otimes_{1:n}
,
Q^\otimes_{1:n}
)
=
\mtv(
\Ber(p)
,
\Ber(p)
)
=
(|p_1-q_1|,\ldots,
|p_n-q_n|)
$,
where $\mtv$
is
the 
marginal total variation sequence defined 
in \eqref{eq:mtv}.

\end{proof}

\subsection{
Symmetrization
}

A key step in our proof
reduces the problem of lower-bounding
$
\tv{
\Ber(p)
-
\Ber(q)
}
$
in terms of $\nrm{p-q}_2$
for general $p,q\in[0,1]^n$
to the 
{\em symmetric}
special case
where $p_i=1-q_i$,
incurring a loss of
only a factor of $2$.

\begin{lemma}[symmetrization \citep{thomas-mo-24}]
\label{lem:sym}
For $p,q\in[0,1]^n$, define 
\beqn
\label{eq:gpq}
\hat\gamma_i=
\frac{|p_i-q_i|}{1+|p_i+q_i-1|},
\quad
\hat p_i=\frac12+\frac{\hat\gamma_i}2,
\quad
\hat q_i=\frac12-\frac{\hat\gamma_i}2,
\qquad
i\in[n]
.
\eeqn
Then
\beqn
\label{eq:pqhatell1}
\abs{\hat p_i-\hat q_i} &\ge& \frac12\abs{p_i-q_i},
\qquad i\in[n]
;\\
\label{eq:pqhattv}
\tv{
\Ber(p)
-
\Ber(q)
}
&\ge&
\tv{
\Ber(\hat p)
-
\Ber(\hat q)
}
.
\eeqn
\end{lemma}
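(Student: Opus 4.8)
The first claim \eqref{eq:pqhatell1} is essentially free: since $p_i,q_i\in[0,1]$ we have $|p_i+q_i-1|\le1$, so the denominator in the definition of $\hat\gamma_i$ is at most $2$, whence $\hat p_i-\hat q_i=\hat\gamma_i\ge\tfrac12|p_i-q_i|$; the same estimate $|p_i-q_i|\le1\le1+|p_i+q_i-1|$ gives $\hat\gamma_i\in[0,1]$, so that $\hat p_i,\hat q_i\in[0,1]$ are legitimate Bernoulli parameters. The real content is \eqref{eq:pqhattv}, and the plan is to exhibit a Markov operator $T$ with $T(\Ber(p))=\Ber(\hat p)$ and $T(\Ber(q))=\Ber(\hat q)$ and then invoke the data processing inequality (\lemref{lem:data-proc}). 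Because all four measures are products on $\{0,1\}^n$, it suffices to construct, for each coordinate $i$, a channel $T_i$ on $\{0,1\}$ with $T_i(\Ber(p_i))=\Ber(\hat p_i)$ and $T_i(\Ber(q_i))=\Ber(\hat q_i)$, and then take $T=T_1\otimes\cdots\otimes T_n$.

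Two reductions precede the construction of $T_i$. First, a bit flip on coordinate $i$ is its own (Markov) inverse, so by \lemref{lem:data-proc} applied in both directions it preserves TV; applying it replaces $(p_i,q_i)$ by $(1-p_i,1-q_i)$, which leaves $|p_i-q_i|$ and $|p_i+q_i-1|$—hence $\hat\gamma_i$, and hence the RHS of \eqref{eq:pqhattv} up to the same flip—unchanged. Thus we may assume $p_i\ge q_i$ for every $i$. Second, if $p_i=q_i$ then $\hat p_i=\hat q_i=\tfrac12$ and the channel outputting a fair coin regardless of its input does the job, so we may assume $p_i>q_i$. Parametrizing a binary channel by $a_i=T_i(1\mid0)$ and $b_i=T_i(1\mid1)$, so that $T_i(\Ber(x))=\Ber\!\big((1-x)a_i+xb_i\big)$, the two requirements become a $2\times2$ linear system in $(a_i,b_i)$, solved by
\[
g_i:=\frac{1}{1+|p_i+q_i-1|},\qquad
a_i=\tfrac12-\tfrac12\,g_i(p_i+q_i),\qquad
b_i=\tfrac12+\tfrac12\,g_i(2-p_i-q_i).
\]

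The one substantive point—and the reason the denominator in $\hat\gamma_i$ takes exactly the form $1+|p_i+q_i-1|$—is that these $a_i,b_i$ actually lie in $[0,1]$, i.e. that $g_i(p_i+q_i)\le1$ and $g_i(2-p_i-q_i)\le1$ (nonnegativity is clear). Both inequalities fall to a two-line case split on the sign of $p_i+q_i-1$: for $p_i+q_i\ge1$ the first is an equality and the second reads $2-p_i-q_i\le p_i+q_i$, while for $p_i+q_i\le1$ the two are interchanged. Geometrically, the pairs $(\Ber(x),\Ber(y))$ reachable from $(\Ber(p_i),\Ber(q_i))$ by a channel fill the parallelogram with vertices $(0,0),(p_i,q_i),(1,1),(1-p_i,1-q_i)$, and $\hat\gamma_i$ is precisely the maximal value of $|x-y|$ on that parallelogram along the antidiagonal $x+y=1$, so $(\hat p_i,\hat q_i)$ sits on its boundary and is reachable. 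Assembling the $T_i$ and applying \lemref{lem:data-proc} to $T=\bigotimes_i T_i$ gives $\tv{\Ber(\hat p)-\Ber(\hat q)}\le\tv{\Ber(p)-\Ber(q)}$. I expect the only genuine work to be the bookkeeping in the two reductions together with the short verification that $a_i,b_i$ are valid probabilities; everything else is linear algebra and an appeal to data processing.
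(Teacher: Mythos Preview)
Your proposal is correct and follows essentially the same route as the paper: both argue \eqref{eq:pqhatell1} via the trivial bound on the denominator, reduce to $p_i>q_i$ by a bit flip, and then write down the same coordinatewise channel (your $a_i,b_i$ are exactly the off-diagonal entries of the paper's stochastic matrix $M$, since $\gamma/(p-q)=g_i$ under $p_i>q_i$), verify its entries lie in $[0,1]$, and invoke data processing. Your parallelogram remark is a nice gloss but not a different argument.
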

\begin{proof}
There is no loss of generality in assuming $p_i>q_i$,
since the $p_i=q_i$ case is trivial, and if $p_i<q_i$,
we can apply the transformation $p_i\mapsto 1-p_i$, $q_i\mapsto1-q_i$,
which leaves both $|p_i-q_i|$
and
$
\tv{
\Ber(p)
-
\Ber(q)
}
$ unchanged.
We will use the notation $X\sim P$
to denote a random variable $X$ distributed according to the law $P$.
We proceed to construct a 
randomized
function $F_{p,q}:\set{0,1}\to\set{0,1}$
with the following property: for any $p,q\in[0,1]$,
if 
$X\sim\Ber(p)$ 
then
$
F_{p,q}(X)\sim
\Ber(\frac12+\frac\gamma2)
$ 
and
if 
$Y\sim\Ber(q)$ 
then
$
F_{p,q}(Y)\sim\Ber(\frac12-\frac\gamma2)$,
where
$
\gamma=
\frac{|p-q|}{1+|p+q-1|}
.
$
We define $F_{p,q}$
by its stochastic matrix
\beqn
\label{eq:M}
M = 
\left(\begin{array}{ll} 
\P(F_{p,q}(1)=1) & 
\P(F_{p,q}(1)=0) \\ 
\P(F_{p,q}(0)=1) & 
\P(F_{p,q}(0)=0) 
\end{array}\right) 
= 
\left(\begin{array}{ll}
\frac12 + \frac{\gamma}{p-q}\frac{2-p-q}{2} & 
\frac12 - \frac{\gamma}{p-q}\frac{2-p-q}{2} \\ 
\frac12 - \frac{\gamma}{p-q}\frac{p+q}{2} & 
\frac12 + \frac{\gamma}{p-q}\frac{p+q}{2} 
\end{array}\right).
\eeqn
To verify that $F_{p,q}$ is a valid randomized map, we must check that $M$ is indeed a stochastic matrix. Its rows obviously sum to $1$ and its entries are non-negatives, since 
\beq
0 &\le& \frac{\gamma}{p-q}\frac{2-p-q}{2} = \frac12 \frac{2-p-q}{1+|1-p-q|} \le \frac12 ;\\
0 &\le& \frac{\gamma}{p-q} \frac{p+q}{2} = \frac12 \frac{p+q}{1+|p+q-1|} \le \frac12.
\eeq
Next, we verify 
that for $X\sim\Ber(p)$,
$F_{p,q}(X)\sim\Ber(\frac12+\frac\gamma2)$,
which boils down to a matrix-vector multiplication:
\beq
\big(
\P(F(X)=1), \P(F(X)=0)
\big) 
&=& 
\big(\P(X=1) , \P(X=0)\big) M
\\
&=& 
\big(p , 1-p\big) M
\\&=&
\left( \frac{p+1-p}{2} + \frac{\gamma}{2(p-q)}\big(p(2-p-q) -(1-p)(p+q)\big) \right.,
\\&&
\left.
\frac{p+1-p}{2}  - \frac{\gamma}{2(p-q)}\big(p(2-p-q)-(1-p)(p+q)\big) 
\right)
\\
&=&
 \left( \frac12 + \frac{\gamma}{2}, \frac12 - \frac{\gamma}{2}\right).
\eeq
A similar calculation shows that 
$F_{p,q}(Y)\sim\Ber(\frac12-\frac\gamma2)$
for
$Y\sim\Ber(q)$, as required.

We now extend 
$F_{p,q}$
from a random map $\set{0,1}\mapsto\set{0,1}$
determined by
$p,q\in[0,1]$
to a random map $\set{0,1}^n\mapsto\set{0,1}^n$
determined by
$p,q\in[0,1]^n$
in the natural way.
Namely, 
for
$p,q\in[0,1]^n$,
if 
$X=(X_1,\ldots,X_n)\sim\Ber(p)$
and
$Y=(Y_1,\ldots,Y_n)\sim\Ber(q)$,
we define
$\hat X=(\hat X_1,\ldots,\hat X_n)$
by 
$\hat X_i=F_{p_i,q_i}(X_i)$
and
$\hat Y=(\hat Y_1,\ldots,\hat Y_n)$
by $\hat Y_i=F_{p_i,q_i}(Y_i)$.
Since all of the coordinates are independent, we have
$\hat X\sim \Ber(\hat p)$
and
$\hat Y\sim \Ber(\hat q)$,
with $\hat p,\hat q,
$
as defined in \eqref{eq:gpq}.
Then \eqref{eq:pqhattv}
follows from
the
data processing inequality (Lemma~\ref{lem:data-proc}),
and
\eqref{eq:pqhatell1}
from the elementary fact that
\beq
\frac{|u-v|}{1+|u+v-1|}
&\ge&
\frac12
|u-v|,
\qquad u,v\in[0,1].
\eeq

\end{proof}

\subsection{Proof of Theorem~\ref{thm:main}}
The following 
numerical 
relations
are elementary to verify:
\beqn
\label{eq:uvmin}
\min\set{u,v}
&=&
\sqrt{uv}\exp\paren{-\frac12\abs{\log\frac{u}{v}}},
\qquad
u,v>0,
\\
\label{eq:log2x}
\log\frac{1+x}{1-x}
&\ge&
2x,
\qquad x\ge0,
\\
\label{eq:expx}
1-\exp(-x) &\ge& 
\paren{
2-2
{\mathe}^{-1/2}
}
\min\set{x,\frac12}
,
\qquad x\ge0
.
\eeqn

By Lemma~\ref{lem:red-ber},
it suffices to prove the claim for
$P=\Ber(p)$ and $Q=\Ber(q)$
with
arbitrary
$p,q\in[0,1]^n$.
Now
Lemma~\ref{lem:sym}
yields $\hat p,\hat q\in[0,1]^n$
such that
$\hat q=1-\hat p$,
$
\nrm{\hat p-\hat q}_2
\ge
\frac12\nrm{ p- q}_2
$,
and
$
\tv{
\Ber(p)
-
\Ber(q)
}
\ge
\tv{
\Ber(\hat p)
-
\Ber(\hat q)
}
$.
Writing 
$
\Ber(\hat p)
=\hat P=
\hat P^\otimes_{1:n}
$
and
$
\Ber(\hat q)
=\hat Q=
\hat Q^\otimes_{1:n}
$
and invoking
\eqref{eq:scheffe},
we have
\beq
1-\tv{\hat P-\hat Q}
&=&
\sum_{\omega\in\set{0,1}^n}
\min\set{\hat P(\omega), \hat Q(\omega)}
\\
&\maavar{a}=&
\sum_{\omega\in\set{0,1}^n}
\sqrt{\hat P(\omega) \hat Q(\omega)}
\exp\paren{-\frac12\abs{\log\frac{\hat P(\omega)}{\hat Q(\omega)}}}
\\
&=&
\sqrt{
\prod_{i=1}^n
\hat p_i \hat q_i
}
\sum_{\omega\in\set{0,1}^n}
\exp\paren{-\frac12\abs{\log\frac{\hat P(\omega)}{\hat Q(\omega)}}}
\\
&
\maavar{b}=
&
\prod_{i=1}^n
2
\sqrt{
\hat p_i(1- \hat p_i)
}
\cdot
\E_{Z\sim\mathrm{Uniform}\set{0,1}^n}
\exp\paren{-\frac12\abs{\log\frac{\hat P(Z)}{\hat Q(Z)}}}
\\
&\le&
\E_{Z\sim\mathrm{Uniform}\set{0,1}^n}
\exp\paren{-\frac12\abs{\log\frac{\hat P(Z)}{\hat Q(Z)}}}
,
\eeq
where 
(a) 
follows
from \eqref{eq:uvmin}
and
(b) by multiplying each of the two factors
by
$2^n$
and $2^{-n}$, respectively.
Thus,
\beq
\tv{
\hat P-\hat Q
}
&=&
1-\nrm{
\min\set{\hat P,\hat Q}
}_1\\
&\ge&
\E_Z
\sqprn{
1-
\exp\paren{-\frac12\abs{
\sum_{i=1}^n 
\log\frac{\hat P_i(Z_i)}{\hat Q_i(Z_i)}
}
}
}
\\
&
\maavar{c}{\ge}
&
0.4571
\E_Z
\sqprn{
1-
\exp\paren{
-\frac12
\sqrt{
\sum_{i=1}^n\paren{
\log\frac{\hat P_i(Z_i)}{\hat Q_i(Z_i)}
}^2
}
}
}
\\&
\maavar{d}
=
&
0.4571
\sqprn{
1-
\exp\paren{
-\frac12
\sqrt{
\sum_{i=1}^n\paren{
\log
\frac{\hat p_i}{
\hat q_i
}
}^2
}
}
}
\\&
\maavar{e}
\ge
&
0.4571
\sqprn{
1-
\exp\paren{
-\frac12
\sqrt{
\sum_{i=1}^n\paren{
2
(\hat p_i-\hat q_i)
}^2
}
}
}
\\&=&
0.4571
\sqprn{
1-
\exp\paren{
-
\nrm{\hat p-\hat q}_2
}
}
\\&
\maavar{f}
\ge
&
0.3597
\min\set{
\nrm{\hat p-\hat q}_2,
\frac12}
,
\eeq
where (c) follows by 
applying
Lemma~\ref{lem:lowther}
to the 
concave function
$
t\mapsto1-\exp(-t/2)
$
and the
symmetric random variables 
$
\log\frac{\hat P_i(Z_i)}{\hat Q_i(Z_i)}
$,
(d) by noticing that
$
\paren{\log\frac{P_i(Z_i)}{Q_i(Z_i)}}^2
=
\paren{\log\frac{\hat p_i}{\hat q_i}}^2
$
almost surely,
(e) by applying 
\eqref{eq:log2x}
to
$
\log\frac{\hat p_i}{\hat q_i}
=
\log\frac{\frac12+\frac{\hat\gamma_i}2}{\frac12-\frac{\hat\gamma_i}2}
$
and (f)
via \eqref{eq:expx}.
Since 
$
\nrm{\hat p-\hat q}_2
\ge\frac12
\nrm{ p- q}_2
$, we are done. \qed

\subsection{Proof of Theorem~\ref{thm:sqrt-opt}}
For any $n\ge 1$, put $p=(1/n,1/n,\ldots,1/n)$
and $q=(0,0,\ldots,0)$. Then
$\tv{\Ber(p)-\Ber(q)}=1 - (1 - 1/n)^n
\ge1-\mathe\inv>0.63
$.
For $p',q'$, put $\gamma=1/n$
and define 
$p'_i=\frac12+\frac{\gamma}2$
and
$q'_i=\frac12-\frac{\gamma}2$.
Then by 
Theorem~\ref{thm:symm-ub},
we have 
$
\tv{\Ber(p')-\Ber(q')}
\le 
\nrm{p'-q'}_2
=n^{-1/2}
$.
This implies the $\sim\sqrt n$ gap between
the two variational distances.
Since $\nrm{p-q}_2
=
\nrm{p'-q'}_2
$, we are done. \qed

\subsection{Proof of Theorem~\ref{thm:symm-ub}}

For $p\in[0,1]^n$
and $q=1-p$,
put
$P=\Ber(p)$
and
$Q=\Ber(q)$.
\citet[Theorem 4]{ak-experts24}
implies
that
\beq
\sum_{
\omega\in\set{0,1}^n
}
\min\set{P(\omega),Q(\omega)}
&\ge& 
{
\prod_{i=1}^n 2\sqrt{p_iq_i}
}
\cdot 
\exp\paren{
-\frac12\sqrt{\sum_{i=1}^n 
\paren{\log\frac{p_i}{q_i}
}
^2}
}.
\eeq
We first
reparametrize
$p,q$ as 
$p_i=\frac12+\frac{x_i}2$
and
$q_i=\frac12-\frac{x_i}2$
for $x\in[0,1]^n$.
Thus, to prove the claim, it suffices to show that
\beq
1-
\prod_{i=1}^n \sqrt{1-x_i^2}
\cdot
\exp\paren{
-\frac12\sqrt{\sum_{i=1}^n 
\paren{\log\frac{
1+x_i
}{
1-x_i}
}
^2}
}
&\le&
\nrm{x}_2,
\qquad x\in[0,1]^n.
\eeq
Reparametrizing by $t_i=\tanh\inv x_i$,
the equivalent claim is
\beq
1-
\exp\paren{
-\sum_{i=1}^n\log\cosh t_i
-\sqrt{
\sum_{i=1}^n  t_i^2
}
}
&\le&
\sqrt{
\sum_{i=1}^n  
\tanh^2 t_i
},
\qquad
t_i\ge0.
\eeq
It suffices to prove the claim for $n=2$, as the general
case will follow the same argument.
To this end, we define
\beq
F(t_1,t_2)
&=&
\sqrt{
\sum_{i=1}^2  
\tanh^2 t_i
}
+
\exp\paren{
-\sum_{i=1}^2\log\cosh t_i
-\sqrt{
\sum_{i=1}^2  t_i^2
}
}
-1
\eeq
and compute
\beq
\ddel{}{t_1}F
&=&
\text{sech}(t_1)
\left(\frac{\mathe^{-\sqrt{t_1^2+t_2^2}}
\text{sech}(t_2)
\left(\sqrt{t_1^2+t_2^2} \tanh (t_1)+t_1\right)}{\sqrt{t_1^2+t_2^2}}+\frac{\tanh (t_1) \text{sech}(t_1)}{\sqrt{\tanh ^2 t_1+\tanh ^2 t_2}}\right)   
,
\eeq
the latter clearly
nonnegative.
The same holds for $\ddel{F}{t_2}$,
which implies that $F$ is minimized 
at $t_1=t_2=0$. Since 
$F(0,0)=0$, 
the proof is complete.
\qed

\section*{Acknowledgments}
We thank
Cl\'ement Canonne,
Ramon van Handel,
Mark Kozlenko,
Yury Makarychev,
Maxim Raginsky,
Mario Ullrich,
Roman Vershynin,
Rotem Zur,
and 
in
particular,
George Lowther, 
John (Tripp) Roberts,
Gregory Rosenthal,
and Thomas Steinke
for very useful discussions.
\appendix

\section{Auxiliary lemma}

The following result was proved by George Lowther,
answering a question we had posed. We provide a proof sketch for completeness.
\begin{lemma}[\citet{lowther-lemma}]
\label{lem:lowther}
Let $X_i$, $i\in[n]$, be independent symmetric random variables 
and $f:\R\to\R$ an increasing concave function with $f(0)=0$.
Define the random variables $Y=\abs{\sum_{i=1}^n X_i}$ and 
$Z=\sqrt{\sum_{i=1}^n X_i^2}$.
Then
\beqn
\label{eq:lowether}
\E f(Z) &\le& c\E f(Y),
\eeqn
where $c\le (1/\sqrt 2-1/4)\inv\approx 2.187$
and can conjecturally be improved to $2$.
\end{lemma}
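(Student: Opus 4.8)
The plan is to reduce to Rademacher sums, where the optimal constant $1/\sqrt2$ in Khintchine's $L_1$--$L_2$ inequality (Szarek) is available, and then close with a single scalar inequality. Note first that $f$ is nonnegative on $[0,\infty)$ (being nondecreasing with $f(0)=0$), and that we only ever evaluate it there, so all expectations below are of nonnegative quantities and Tonelli applies with no integrability caveats. I would condition on the vector of absolute values $a=(\abs{X_1},\dots,\abs{X_n})$: then $Z=\nrm{a}_2$ is a deterministic function of $a$, while by symmetry and independence the conditional law of $(X_1,\dots,X_n)$ given $a$ is that of $(\eps_1a_1,\dots,\eps_na_n)$ with $\eps_i$ i.i.d.\ Rademacher. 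Hence it suffices to prove, for each fixed $a\in[0,\infty)^n$,
\[
\E_{\eps}\, f\paren{\abs{\textstyle\sum_{i=1}^n\eps_ia_i}}\ \ge\ \paren{\tfrac{1}{\sqrt2}-\tfrac14}\,f(\nrm{a}_2),
\]
and then average over $a$. Write $W=\sum_i\eps_ia_i$ and $\sigma=\nrm{a}_2$, so $\E W=0$ and $\E[W^2]=\sigma^2$; the bound is trivial when $f(\sigma)=0$, so assume $f(\sigma)>0$.

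For the displayed inequality I would use two elementary facts. First, since $f$ is concave with $f(0)=0$, the ratio $t\mapsto f(t)/t$ is nonincreasing on $(0,\infty)$; combined with monotonicity of $f$ this yields $f(x)\ge f(\sigma)\min\set{x/\sigma,\,1}$ for all $x\ge0$ (check $x\le\sigma$ and $x\ge\sigma$ separately). Second, $\min\set{t,1}\ge t-\tfrac{t^2}{4}$ for every $t\ge0$, because the difference equals $\tfrac{t^2}{4}\ge0$ when $t\le1$ and $\tfrac14(t-2)^2\ge0$ when $t\ge1$. Applying the first with $x=\abs W$ and the second with $t=\abs W/\sigma$,
\[
\E f(\abs W)\ \ge\ f(\sigma)\,\E\min\set{\tfrac{\abs W}{\sigma},1}\ \ge\ f(\sigma)\paren{\tfrac{\E\abs W}{\sigma}-\tfrac{\E[W^2]}{4\sigma^2}}\ =\ f(\sigma)\paren{\tfrac{\E\abs W}{\sigma}-\tfrac14}.
\]
Szarek's theorem gives $\E\abs W\ge\sigma/\sqrt2$, so $\E f(\abs W)\ge(\tfrac1{\sqrt2}-\tfrac14)f(\sigma)$, which is exactly the pointwise estimate; averaging over $a$ completes the proof with $c=(\tfrac1{\sqrt2}-\tfrac14)\inv\approx2.187$. (The constant $\tfrac1{\sqrt2}-\tfrac14=0.4571\ldots$ is the one that surfaces at step (c) in the proof of Theorem~\ref{thm:main}.)

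Every step here is routine once the pieces are aligned; there is no real obstacle, only the observation that the quadratic surrogate $\min\set{t,1}\ge t-t^2/4$ marries cleanly with Szarek's $1/\sqrt2$ and the identity $\E[W^2]=\sigma^2$. The one genuinely hard question --- which I do not expect this argument to reach --- is the conjectural sharpening to $c=2$: the surrogate is tight only at $t=0$ and $t=2$, whereas the apparent extremal configuration is $a=(1,1)$, for which $\abs W/\sigma$ equals $\sqrt2$ or $0$ with probability $\tfrac12$ each, so that $\E\min\set{\abs W/\sigma,1}=\tfrac12$. Thus $c=2$ is equivalent to the sharp anti-concentration statement $\E\min\set{\abs W,\sigma}\ge\tfrac\sigma2$ for every Rademacher sum $W$ of standard deviation $\sigma$, which the elementary estimate above does not capture.
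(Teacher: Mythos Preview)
Your proof is correct and follows essentially the same route as the paper's sketch: reduction to Rademacher sums by conditioning on the absolute values, Szarek's sharp $L_1$--$L_2$ Khintchine constant $1/\sqrt2$, and the quadratic surrogate $\min\{t,1\}\ge t-t^2/4$ (equivalently $y-1\le y^2/4$). Your use of the pointwise bound $f(x)\ge f(\sigma)\min\{x/\sigma,1\}$ is a touch more direct than the paper's integral representation of concave functions followed by a monotonicity-in-$u$ argument, but the substance is identical.
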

\begin{proof}[Proof sketch]
Since the $X_i$ are symmetric, 
the expectations
in \eqref{eq:lowether}
are unchanged if each $X_i$ is 
replaced by $X_i'=\eps_i X_i$,
where the
$\eps_i$
are
independent
Rademacher random variables
with $\P(\eps_i=\pm1)=1/2$. Then a standard argument
lets us prove \eqref{eq:lowether} for $X_i'$
conditionally $X_i$,
from which the full claim will follow.
Thus, there is no loss of generality in taking
$\P(X_i=\pm a_i)=1/2$ for some $a_i\in\R$.
Further, we may assume $\nrm{a}_2=1$, since
any normalizing factor can be absorbed into $f$.

There is no loss of generality in assuming $f$ to be twice
differentiable, in which case we have the well known
integral representation
\beq
f(x) &=& \int_0^\infty \min\set{x,u}(-f''(u))\mathd u
.
\eeq
Thus, it suffices to consider $f$
of the form $f(t)=\min\set{t,u}$. Since we have assumed
$\nrm{a}_2=1$, the claim has been reduced to
\beq
c
\E\min\set{Y,u}
\ge f(Z)=\min\set{Z,u}.
\eeq
A 
straightforward
monotonicity argument shows that the case $u=1$ implies all the rest.

It remains to lower-bound 
$
\E\min\set{Y,1}
=
\E Y-\E\max\set{Y-1,0}
$.
Khintchine's inequality with the optimal constant
\citep{Haagerup1981} yields $\E Y\ge1/\sqrt 2$.
Since $y-1\le y^2/4$, we have
$
\E Y-\E\max\set{Y-1,0}
\le
\E[Y^2/2]=1/4
$.
\end{proof}

\bibliographystyle{plainnat}
\bibliography{refs}

\begin{thebibliography}{16}
\providecommand{\natexlab}[1]{#1}
\providecommand{\url}[1]{\texttt{#1}}
\expandafter\ifx\csname urlstyle\endcsname\relax
  \providecommand{\doi}[1]{doi: #1}\else
  \providecommand{\doi}{doi: \begingroup \urlstyle{rm}\Url}\fi

\bibitem[Bhattacharyya et~al.(2023)Bhattacharyya, Gayen, Meel, Myrisiotis, Pavan, and Vinodchandran]{DBLP:conf/ijcai/0001GMMPV23}
Arnab Bhattacharyya, Sutanu Gayen, Kuldeep~S. Meel, Dimitrios Myrisiotis, A.~Pavan, and N.~V. Vinodchandran.
\newblock On approximating total variation distance.
\newblock In \emph{Proceedings of the Thirty-Second International Joint Conference on Artificial Intelligence, {IJCAI} 2023, 19th-25th August 2023, Macao, SAR, China}, pages 3479--3487. ijcai.org, 2023.
\newblock \doi{10.24963/IJCAI.2023/387}.
\newblock URL \url{https://doi.org/10.24963/ijcai.2023/387}.

\bibitem[Bhattacharyya et~al.(2024)Bhattacharyya, Gayen, Meel, Myrisiotis, Pavan, and Vinodchandran]{bhattacharyya2024total}
Arnab Bhattacharyya, Sutanu Gayen, Kuldeep~S. Meel, Dimitrios Myrisiotis, A.~Pavan, and N.~V. Vinodchandran.
\newblock Total variation distance meets probabilistic inference.
\newblock In \emph{Forty-first International Conference on Machine Learning}, 2024.
\newblock URL \url{https://openreview.net/forum?id=6OSLjErBhh}.

\bibitem[Cohen et~al.(2023)Cohen, Kontorovich, Koolyk, and Wolfer]{CohenKKW23}
Doron Cohen, Aryeh Kontorovich, Aaron Koolyk, and Geoffrey Wolfer.
\newblock Dimension-free empirical entropy estimation.
\newblock \emph{{IEEE} Trans. Inf. Theory}, 69\penalty0 (5):\penalty0 3190--3202, 2023.
\newblock \doi{10.1109/TIT.2022.3232739}.
\newblock URL \url{https://doi.org/10.1109/TIT.2022.3232739}.

\bibitem[Devroye and Lugosi(2001)]{MR1843146}
Luc Devroye and G{\'a}bor Lugosi.
\newblock \emph{Combinatorial methods in density estimation}.
\newblock Springer Series in Statistics. Springer-Verlag, New York, 2001.
\newblock ISBN 0-387-95117-2.
\newblock \doi{10.1007/978-1-4613-0125-7}.
\newblock URL \url{http://dx.doi.org/10.1007/978-1-4613-0125-7}.

\bibitem[Even-Dar et~al.(2007)Even-Dar, Kakade, and Mansour]{Even-DarKM07}
Eyal Even-Dar, Sham~M. Kakade, and Yishay Mansour.
\newblock The value of observation for monitoring dynamic systems.
\newblock In \emph{IJCAI}, pages 2474--2479, 2007.

\bibitem[Feng et~al.(2023)Feng, Guo, Jerrum, and Wang]{FengApproxTV23}
Weiming Feng, Heng Guo, Mark Jerrum, and Jiaheng Wang.
\newblock \emph{A simple polynomial-time approximation algorithm for the total variation distance between two product distributions}, pages 343--347.
\newblock 2023.
\newblock \doi{10.1137/1.9781611977585.ch30}.
\newblock URL \url{https://epubs.siam.org/doi/abs/10.1137/1.9781611977585.ch30}.

\bibitem[Feng et~al.(2024)Feng, Liu, and Liu]{Feng24Deterministically}
Weiming Feng, Liqiang Liu, and Tianren Liu.
\newblock \emph{On Deterministically Approximating Total Variation Distance}, pages 1766--1791.
\newblock 2024.
\newblock \doi{10.1137/1.9781611977912.70}.
\newblock URL \url{https://epubs.siam.org/doi/abs/10.1137/1.9781611977912.70}.

\bibitem[Gibbs and Su(2002)]{gibbs02}
Alison~L. Gibbs and Francis~E. Su.
\newblock On choosing and bounding probability metrics.
\newblock \emph{International Statistical Review}, 70\penalty0 (3):\penalty0 419--435, 2002.

\bibitem[Haagerup(1981)]{Haagerup1981}
Uffe Haagerup.
\newblock The best constants in the khintchine inequality.
\newblock \emph{Studia Mathematica}, 70\penalty0 (3):\penalty0 231--283, 1981.
\newblock URL \url{http://eudml.org/doc/218383}.

\bibitem[Kontorovich(2012)]{kontorovich12}
Aryeh Kontorovich.
\newblock Obtaining measure concentration from {M}arkov contraction.
\newblock \emph{Markov Processes and Related Fields}, 4:\penalty0 613--638, 2012.

\bibitem[Kontorovich(2024)]{ak-experts24}
Aryeh Kontorovich.
\newblock Sharp bounds on aggregate expert error, preprint.
\newblock 2024.

\bibitem[Lowther(2024)]{lowther-lemma}
George Lowther.
\newblock Expectation comparison inequality for concave function of symmetric random variables.
\newblock MathOverflow, 2024.
\newblock URL \url{https://mathoverflow.net/q/478525}.
\newblock URL:https://mathoverflow.net/q/478525 (version: 2024-09-09).

\bibitem[Polyanskiy and Wu(2024)]{Polyanskiy_Wu_2024}
Yury Polyanskiy and Yihong Wu.
\newblock \emph{Information Theory: From Coding to Learning}.
\newblock Cambridge University Press, 2024.

\bibitem[Sason and Verd{\'{u}}(2016)]{SasonV16}
Igal Sason and Sergio Verd{\'{u}}.
\newblock f-divergence inequalities.
\newblock \emph{{IEEE} Trans. Inf. Theory}, 62\penalty0 (11):\penalty0 5973--6006, 2016.
\newblock \doi{10.1109/TIT.2016.2603151}.
\newblock URL \url{https://doi.org/10.1109/TIT.2016.2603151}.

\bibitem[Steinke(2024)]{thomas-mo-24}
Thomas Steinke.
\newblock Poisson binomial conjecture.
\newblock MathOverflow, 2024.
\newblock URL \url{https://mathoverflow.net/q/477441}.
\newblock URL:https://mathoverflow.net/q/477441 (version: 2024-08-22).

\bibitem[Tsybakov(2009)]{tsybakov09}
Alexandre~B. Tsybakov.
\newblock \emph{Introduction to Nonparametric Estimation}.
\newblock Springer series in statistics. Springer, 2009.
\newblock ISBN 978-0-387-79051-0.
\newblock \doi{10.1007/B13794}.
\newblock URL \url{https://doi.org/10.1007/b13794}.

\end{thebibliography}

\end{document}